\title[]{Weak K\"ahler hyperbolicity is birational}
\author[F. Bei\and B. Claudon \and S. Diverio \and S. Trapani]{Francesco Bei \and Benoît Claudon \and Simone Diverio \and Stefano Trapani}
\address{Francesco Bei and Simone Diverio \\ Dipartimento di Matematica \lq\lq Guido Castelnuovo\rq\rq{} \\ SAPIENZA Universit\`a di Roma \\ Piazzale Aldo Moro 5 \\ I-00185 Roma.}
\email{bei@mat.uniroma1.it \\ diverio@mat.uniroma1.it } 
\address{Benoît Claudon \\ Univ Rennes \\ CNRS \\ IRMAR - UMR 6625 \\ F-35000 Rennes, France \\ and Institut Universitaire de France} 
\email{benoit.claudon@univ-rennes1.fr} 
\address{Stefano Trapani \\  Università di Roma \lq\lq Tor Vergata\rq\rq{} \\ Via della Ricerca Scientifica 1 \\ I-00133 Roma.} 
\email{trapani@mat.uniroma2.it} 
\keywords{Weakly Kähler hyperbolic manifold, birational invariance, classifying space, generically finite map, toroidal compactifications of ball quotients}
\subjclass[2020]{Primary: 32Q15; Secondary: 32J25, 32J27, 53C23.}
\thanks{The first-named author is partially supported by the \lq\lq Gruppo Nazionale per le Strutture Algebriche, Geometriche e le loro Applicazioni\rq\rq{} of the Istituto Nazionale di Alta Matematica \lq\lq Francesco Severi\rq\rq{}.}
\thanks{The second-named author would like to thank Institut Universitaire de France for providing excellent working conditions. He also benefits from the support of the French government \lq\lq Investissements d’Avenir\rq\rq{} program integrated to France 2030, bearing the following reference ANR-11-LABX-0020-01.}
\thanks{The third-named author is partially supported by the \lq\lq Gruppo Nazionale per le Strutture Algebriche, Geometriche e le loro Applicazioni\rq\rq{} of the Istituto Nazionale di Alta Matematica \lq\lq Francesco Severi\rq\rq{} as well as the \lq\lq SEED PNR\rq\rq{} project of SAPIENZA Universit\`a di Roma}
\thanks{The forth-named author acknowledges the MIUR Excellence Department Project awarded to the Department of Mathematics, University of Rome Tor Vergata, CUP E83C18000100006, as well as the MUR Excellence Department Project MatMod@TOV awarded to the Department of Mathematics, University of Rome Tor Vergata, CUP E83C23000330006}
\date{\today}
\theoremstyle{plain}
\newtheorem{thm}{Theorem}[section]
\newtheorem{cor}[thm]{Corollary}
\newtheorem*{mainthm*}{Main Theorem}
\newtheorem{lem}[thm]{Lemma}
\newtheorem{prop}[thm]{Proposition}
\newtheorem{quest}[thm]{Question}
\theoremstyle{remark}
\newtheorem{rem}[thm]{Remark}
\theoremstyle{definition}
\newtheorem{defn}[thm]{Definition}
\newcommand{\R}{\mathbb{R}}
\newcommand{\B}{\mathbb{B}}
\newcommand{\PP}{\mathbb{P}}
\DeclareMathOperator{\hyp}{hyp}
\DeclareMathOperator{\PU}{PU}
\DeclareMathOperator{\tor}{tor}
\DeclareMathOperator{\BB}{BB}
\begin{document}
\bibliographystyle{amsalpha}
\maketitle
	
\begin{abstract}
We show that a compact Kähler manifold bimeromorphic to a weakly Kähler hyperbolic manifold is weakly Kähler hyperbolic, providing an answer to a problem raised by J. Kollár \cite[Open Problem 18.7]{Kol95}.
\end{abstract}
	
\section{Introduction}

In the chapter devoted to open problems of his book \lq\lq Shafarevic maps and automorphic forms", J. Kollár speaking about Gromov's Kähler hyperbolicity says

\smallskip

\noindent
\emph{\lq\lq From the birational point of view, Kähler hyperbolicity is not natural since it depends on the birational model chosen. It would be desirable to have a birational variant developed. The natural choice seems to be to requires Gromov's condition not for Kähler form but for a degenerate Kähler form.\rq\rq{}}

\smallskip
Let $X$ be a compact Kähler manifold. In \cite{BDET24} the authors introduce a notion called weak Kähler hyperbolicity, mainly in order to study desingularizations of subvarieties of Kähler hyperbolic manifold in the sense of Gromov \cite{Gro91}. It turns out that many key features of Kähler hyperbolic manifolds generalize (in an appropriate way) to weakly Kähler hyperbolic manifolds and, moreover, weak Kähler hyperbolicity seems to be the right notion in order to have birational invariance. 

Given a compact Kähler manifold $X$, define the convex positive cone $\mathcal W_X\subset H^{1,1}(X,\mathbb R)$ to be the set of real cohomology $(1,1)$-classes which are big and nef, and moreover admit a smooth representative which is $\tilde d$-bounded. Here, $\tilde d$-bounded means that such a representative becomes $d$-exact once pulled-back to the universal covering of $X$ and moreover admits a primitive which is bounded with respect to any metric coming from $X$. 

\begin{defn}
A compact Kähler manifold $X$ is said to be \emph{weakly Kähler hyperbolic} if $\mathcal W_X\ne\emptyset$.
\end{defn}

It is proved in \cite{BDET24} that if $\mu\colon X\to Y$ is a modification of compact Kähler manifolds or, more generally, a surjective holomorphic map between compact Kähler manifolds of the same dimension, then the weak Kähler hyperbolicity of $Y$ implies that of $X$. In other words, weak Kähler hyperbolicity pulls back along modifications. 

In this note we prove the analogous property for pushing forwards (we shall actually prove a slightly more general statement, see Remark \ref{rem:genfin}).

\begin{mainthm*}
Let $\mu\colon Z\to X$ be a modification of compact Kähler manifolds. If $Z$ is weakly Kähler hyperbolic, then $X$ is weakly Kähler hyperbolic, too. 
\end{mainthm*}

What we prove in our Main Theorem is that if $\alpha\in\mathcal W_Z$, then the push-forward $\mu_*\alpha$ belongs to $\mathcal W_X$, so that $\mathcal W_X\ne\emptyset$. The key step in the proof shall consist in showing that $\alpha$ is in fact a pull-back of a $\tilde d$-bounded cohomology class on $X$, and this is obtained in a quite surprising fashion, combining techniques from topology and complex geometry.

\smallskip

Being able to push-forward through a modification the property of being weakly Kähler hyperbolic enables us to finally treat the behaviour of weak Kähler hyperbolicity with respect to meromorphic mappings. The following corollary provides the announced answer to \cite[Open Problem 18.7]{Kol95}.

\begin{cor}\label{cor:cor1}
If $X$ and $Y$ are bimeromorphic compact Kähler manifolds, then $X$ is weakly Kähler hyperbolic if and only if $Y$ is weakly Kähler hyperbolic.
\end{cor}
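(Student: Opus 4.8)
The plan is to reduce the equivalence to the two directional results already at our disposal: the pull-back property of weak K\"ahler hyperbolicity along modifications established in \cite{BDET24}, and the push-forward property along modifications provided by our Main Theorem. The bridge between these is the resolution of the bimeromorphic map into a common modification dominating both $X$ and $Y$.

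First I would eliminate the indeterminacies of the given bimeromorphic map $f\colon X\dashrightarrow Y$. By Hironaka's elimination of indeterminacy, realized through a finite sequence of blow-ups along smooth centers, there is a compact complex manifold $Z$ together with modifications $p\colon Z\to X$ and $q\colon Z\to Y$ satisfying $q=f\circ p$ as meromorphic maps. Since $Z$ is obtained from $X$ by successive blow-ups along smooth centers, and blowing up a compact K\"ahler manifold along a smooth center preserves the K\"ahler condition, the manifold $Z$ is again compact K\"ahler. This is the key point that makes both directional results applicable to $p$ and $q$.

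Then I would simply chain the two results. Assuming $X$ is weakly K\"ahler hyperbolic, the pull-back property of \cite{BDET24} applied to the modification $p\colon Z\to X$ shows that $Z$ is weakly K\"ahler hyperbolic; our Main Theorem applied to the modification $q\colon Z\to Y$ then shows that $Y$ is weakly K\"ahler hyperbolic. The reverse implication follows by exchanging the roles of $X$ and $Y$, equivalently by running the same argument for the inverse bimeromorphic map, so that the common K\"ahler modification $Z$ serves both directions at once and yields the stated equivalence.

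The main obstacle is organizational rather than substantive: one has to guarantee that the resolving manifold $Z$ is genuinely compact K\"ahler, since otherwise neither directional statement, each of which is phrased for modifications of compact K\"ahler manifolds, would be available. As noted above, this is secured by the stability of the K\"ahler property under blow-ups along smooth centers, after which the two implications are immediate.
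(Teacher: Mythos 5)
Your proposal is correct and follows essentially the same route as the paper: produce a common compact K\"ahler modification $Z$ dominating $X$ and $Y$ (the paper resolves the closure of the graph of $f$ in $X\times Y$, you invoke Hironaka's elimination of indeterminacy by smooth blow-ups; both are standard and equivalent here), then chain the pull-back property of \cite[Proposition 2.29]{BDET24} along $p$ with the Main Theorem along $q$, and conclude by symmetry.
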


Next, this other corollary answers in particular to \cite[Question~2.30]{BDET24}.

\begin{cor}\label{cor:cor2}
Let $f\colon X\dasharrow Y$ be a generically finite dominant meromorphic mapping between compact Kähler manifolds. 

If $Y$ is weakly Kähler hyperbolic, then $X$ is weakly Kähler hyperbolic. Conversely, if $X$ is weakly Kähler hyperbolic and the induced map on the fundamental groups is injective, then $Y$ is weakly Kähler hyperbolic. 
\end{cor}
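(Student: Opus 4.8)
The plan is to reduce the meromorphic statement to the holomorphic, generically finite case by resolving the indeterminacies of $f$, and then to invoke the tools already available: the pull-back property of \cite{BDET24}, the Main Theorem, and Corollary~\ref{cor:cor1}. Concretely, let $\hat X\to\overline{\Gamma_f}\subset X\times Y$ be a resolution of the closure of the graph of $f$, with $\mu\colon\hat X\to X$ the first projection and $g\colon\hat X\to Y$ the second. Then $\mu$ is a modification of compact K\"ahler manifolds, $\hat X$ is again compact K\"ahler, and $g=f\circ\mu$ is holomorphic, surjective and generically finite; in particular $\dim\hat X=\dim X=\dim Y=:n$ and $g$ has some degree $d\ge 1$. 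Since $\mu$ is a modification, $\mu_*\colon\pi_1(\hat X)\xrightarrow{\ \sim\ }\pi_1(X)$, so that the map induced by $f$ on fundamental groups is by definition $f_*=g_*\circ\mu_*^{-1}$; hence $f_*$ is injective if and only if $g_*\colon\pi_1(\hat X)\to\pi_1(Y)$ is.

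For the first assertion, assume $Y$ is weakly K\"ahler hyperbolic. As $g\colon\hat X\to Y$ is a surjective holomorphic map between compact K\"ahler manifolds of the same dimension, the pull-back property established in \cite{BDET24} yields that $\hat X$ is weakly K\"ahler hyperbolic. Since $\hat X$ and $X$ are bimeromorphic, Corollary~\ref{cor:cor1} (equivalently, the Main Theorem applied to $\mu$) shows that $X$ is weakly K\"ahler hyperbolic. No hypothesis on $\pi_1$ is needed here.

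For the converse, assume $X$ is weakly K\"ahler hyperbolic and $f_*$ is injective. By Corollary~\ref{cor:cor1}, $\hat X$ is weakly K\"ahler hyperbolic as well, so I fix $\alpha\in\mathcal W_{\hat X}$ together with a smooth $\tilde d$-bounded representative $\omega$, while $g_*\colon\pi_1(\hat X)\to\pi_1(Y)$ is injective. The candidate class on $Y$ is the push-forward $g_*\alpha\in H^{1,1}(Y,\R)$, and I would prove $g_*\alpha\in\mathcal W_Y$. Its positivity is the routine part: for an irreducible curve $C\subset Y$ the projection formula gives $g_*\alpha\cdot C=\alpha\cdot g^*C$ with $g^*C$ an effective $1$-cycle, whence $g_*\alpha$ is nef; and since the push-forward of a K\"ahler current is again a K\"ahler current (equivalently, comparing top self-intersections through $g_*g^*=d\cdot\mathrm{id}$), the big class $\alpha$ is sent to a big class. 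Thus $g_*\alpha$ is big and nef.

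The heart of the matter is to prove that $g_*\alpha$ is $\tilde d$-bounded, and this is where the injectivity of $g_*$ enters. Write $\pi_{\hat X}\colon\widetilde{\hat X}\to\hat X$ and $\pi_Y\colon\widetilde Y\to Y$ for the universal coverings and form the fibre product $P:=\hat X\times_Y\widetilde Y$, with projections $q\colon P\to\hat X$ and $G\colon P\to\widetilde Y$. Then $q$ is the covering of $\hat X$ pulled back from $\pi_Y$, so its connected components correspond to the cosets $g_*\pi_1(\hat X)\backslash\pi_1(Y)$; because $g_*$ is injective, the base-point component is associated with $\ker g_*=\{1\}$, i.e. it is exactly the universal cover $\widetilde{\hat X}$ (and so is every component). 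On each component $q^*\omega=\pi_{\hat X}^*\omega=d\theta$ with $\theta$ bounded, while $G$ is proper and generically finite of degree $d$. Proper base change for the Cartesian square gives $\pi_Y^*(g_*\omega)=G_*(q^*\omega)=d\,G_*(q^*\theta)$, so $\Theta:=G_*(q^*\theta)$ is a primitive of $\pi_Y^*(g_*\omega)$ on $\widetilde Y$; although $q$ may have infinitely many components, over each point of $\widetilde Y$ at most $d$ of them meet the fibre of $G$, so $\Theta$ is a locally finite sum of at most $d$ bounded terms, uniformly bounded since the relevant norms are controlled by the compact quotient $g$. The injectivity hypothesis is used precisely to guarantee that the bounded primitive $\theta$, which by definition lives on the \emph{universal} cover of $\hat X$, is available on every component of $q$; without it the components would only be the intermediate cover attached to $\ker g_*$, on which no bounded primitive is granted. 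The main technical obstacle I anticipate is that $g_*\omega$ is merely a closed positive current, smooth off the branch locus, whereas membership in $\mathcal W_Y$ demands a smooth $\tilde d$-bounded representative; I would handle this exactly as in the proof of the Main Theorem, regularizing $g_*\omega$ while keeping the bounded primitive under control (equivalently, noting that $\Theta$ certifies that $g_*\alpha$ lies in the image of the comparison map from bounded to de Rham cohomology, a property that then passes to a smooth representative). This would yield $g_*\alpha\in\mathcal W_Y$, hence $Y$ weakly K\"ahler hyperbolic.
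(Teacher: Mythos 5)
Your first direction (from $Y$ to $X$) is correct and coincides with the paper's argument: pull back along the generically finite surjective map $g=q$ using \cite[Proposition 2.29]{BDET24}, then push down along the modification $\mu=p$ using the Main Theorem. The converse, however, has a genuine gap, and it sits exactly at the point you dismiss as ``the routine part''. The nefness of $g_*\alpha$ does \emph{not} follow from a projection-formula computation against curves: for transcendental $(1,1)$-classes on a compact K\"ahler manifold, nefness means lying in the closure of the K\"ahler cone and is not detected by intersection with curves; worse, the conclusion itself is false in general, since Remark \ref{rem:pushnef} of the paper exhibits nef classes whose push-forward under a bimeromorphic (hence generically finite) map is never nef in dimension $\ge 3$. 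This is precisely the subtlety that forces the Main Theorem's proof to go through the classifying space: nefness of the pushed-forward class is obtained only because the class $\alpha$ is recognized \emph{a posteriori} as a pull-back $\mu^*\beta$, so that Proposition \ref{prop:surjnef} applies. In your situation $g_*$ is merely injective, not surjective, on fundamental groups, so the class $w\in H^2(B\pi_1(\hat X),\R)$ corresponding to $\alpha$ has no reason to descend to $B\pi_1(Y)$, and you cannot write $\alpha=g^*\beta$ for any $\beta$ on $Y$. Your fibre-product construction of a bounded primitive also has an unaddressed problem: the push-forward $G_*(q^*\theta)$ of a bounded form under a branched covering involves the inverse differential of $G$ and is in general unbounded near the branch locus, so boundedness of $\Theta$ does not follow from counting the (at most $d$) sheets.

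The paper resolves both difficulties with one extra geometric step that your proposal is missing: since $g_*$ is injective, pass to the connected \'etale cover $\pi\colon W\to Y$ associated with the subgroup $H=g_*\pi_1(\hat X)\le\pi_1(Y)$, and lift $g$ to $\tilde q\colon \hat X\to W$. By dimension reasons $\tilde q$ is surjective and generically finite, $W$ is compact K\"ahler, $\pi$ is finite \'etale, and now $\tilde q_*\colon\pi_1(\hat X)\to\pi_1(W)$ is an \emph{isomorphism}. Then Remark \ref{rem:genfin} (the generically finite version of the Main Theorem, where nefness and $\tilde d$-boundedness are handled by the Brunnbauer--Kotschick--Sch\"onlinner machinery plus Proposition \ref{prop:surjnef}) gives that $W$ is weakly K\"ahler hyperbolic, and \cite[Proposition 2.6]{BDET24} descends the property along the finite \'etale cover $\pi$ to $Y$. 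You should replace your direct push-forward argument by this factorization, or at the very least supply a proof of nefness that does not contradict Remark \ref{rem:pushnef}.
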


Recall that the degeneracy set $Z_X\subset X$ of a weakly Kähler hyperbolic manifold is a quantitative measure of the failure of $X$ of being Kähler hyperbolic (cf. \cite[Definition 2.24]{BDET24}). There is a precise relation between $Z_X$, $Z_Y$, and the exceptional locus of $f$ in the case where $f\colon X\to Y$ is a generically finite surjective map inducing an injective morphism on fundamental groups.

\begin{cor}\label{cor:cor3}
Let $f\colon X\to Y$ be a generically finite surjective holomorphic map between weakly Kähler hyperbolic manifolds. Suppose that the induced map on the fundamental groups is injective. Then, 
$$
Z_X=f^{-1}(Z_Y)\cup\operatorname{Exc}(f).
$$
\end{cor}

Finally, in Section \ref{sec:toroidal}, we show that (suitable) toroidal compactifications of quotients of the complex unit ball provide examples of weakly Kähler hyperbolic manifolds which are not bimeromorphic to any Kähler hyperbolic manifold. On the other hand, such quotients also admit a singular compactification (namely, the Baily--Borel compactification) which should be in some natural sense Kähler hyperbolic: indeed each of its real $2$-cohomology classes are $\tilde d$-bounded. It is then natural to ask the following.

\begin{quest}
Is there a good natural notion of singular Kähler hyperbolic variety, in particular with the property that in each birational class of weakly Kähler hyperbolic manifolds there is a possibly singular model which is Kähler hyperbolic? 
\end{quest}     

We plan to address the above question in a future work.

\subsubsection*{Acknowledgements} 
The second-named author warmly thanks Pierre Py for enlightening discussions about hyperbolic groups.

The third-named author would like to warmly thank Gabriele Viaggi and Roberto Frigerio for various topological explanations, as well as Sébastien Boucksom for the content of Remark \ref{rem:pushnef}.

This project started while the second-named author was in Roma for a research stay. He would like to thank the Dipartimento di Matematica Guido Castelnuovo, SAPIENZA Università di Roma, for its hospitality.

\section{Tools}
For the reader convenience, we collect in this section the main tools for the proof of our Main Theorem, as well as for the study of toroidal compactifications.

\subsection{Push-forward and pull-back of big or nef classes}

First, we recall how the bigness of a general real $(1,1)$-class behaves under generically finite holomorphic maps.

\begin{prop}[{\cite[Proposition 4.12]{Bou02}}]\label{prop:genfinbig}
Let $f\colon Y\to X$ be a surjective holomorphic map between compact Kähler manifolds of the same dimension. Let $\alpha$ and $\beta$ be real $(1,1)$-cohomology classes on $X$ and $Y$ respectively. Then, 
\begin{itemize}
\item[(i)] the class $\alpha$ is big if and only if $f^*\alpha$ is big,
\item[(ii)] the class $f_*\beta$ is big if $\beta$ is so.
\end{itemize}
\end{prop}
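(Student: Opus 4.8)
The plan is to use Boucksom's characterization that a real $(1,1)$-class is big precisely when its volume is strictly positive, together with the fact that the volume is computed by the non-pluripolar Monge--Amp\`ere mass of a positive current with minimal singularities. Throughout, write $d=\deg f>0$ and let $X_0\subset X$ be the Zariski-dense open subset over which $f$ is \'etale, so that $f\colon Y_0:=f^{-1}(X_0)\to X_0$ is an unramified covering of degree $d$; the complements $X\setminus X_0$ (the branch locus) and $Y\setminus Y_0$ (the ramification locus) are proper analytic subsets, hence pluripolar and negligible for the masses involved.

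For part (i) I would establish the sharper identity $\vol(f^*\alpha)=d\cdot\vol(\alpha)$, whence the equivalence is immediate since $d>0$. The volume is a non-pluripolar Monge--Amp\`ere mass, an integral concentrated on a Zariski-dense open set. Over the \'etale covering $f\colon Y_0\to X_0$ the non-pluripolar product satisfies $\langle(f^*\alpha)^n\rangle=f^*\langle\alpha^n\rangle$, and integrating along the degree-$d$ covering multiplies the total mass by $d$; since the branch and ramification loci are pluripolar they carry no such mass, so the two volumes differ exactly by the factor $d$.

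For part (ii) the most direct route is to push forward a K\"ahler current. Fix a K\"ahler current $S$ in $\beta$ with $S\ge\varepsilon\,\omega_Y$; as $f$ is proper and holomorphic, $f_*S$ is a closed positive $(1,1)$-current in $f_*\beta$, and from $S-\varepsilon\,\omega_Y\ge 0$ one gets
\[
f_*S=\varepsilon\, f_*\omega_Y+f_*(S-\varepsilon\,\omega_Y),
\]
whose second summand is again closed positive. It thus suffices to prove that $f_*\omega_Y$ is itself a K\"ahler current, i.e. $f_*\omega_Y\ge c\,\omega_X$ for some $c>0$. Over $X_0$ the current $f_*\omega_Y$ is the smooth form $\sum_i (f_i)_*\omega_Y$ summed over the $d$ local inverse branches, which is strictly positive since each $f_i$ is a biholomorphism; near the branch locus the pushed-forward mass only concentrates (the local model $w=z^2$ already gives a blow-up like $|w|^{-1}$), so no positivity is lost. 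This yields $f_*\omega_Y\ge c\,\omega_X$ and hence $f_*S\ge\varepsilon c\,\omega_X$, proving $f_*\beta$ big. An alternative, relying on part (i), is to observe that $f^*f_*\beta-\beta$ is pseudoeffective --- over $Y_0$ the diagonal branch of $f^*f_*S$ recovers $S$, while the remaining $d-1$ branches add nonnegative currents --- so that $f^*f_*\beta$ is big (being $\beta$ plus a pseudoeffective class) and (i) forces $f_*\beta$ to be big.

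The main obstacle in both parts is the passage across the non-\'etale locus: every mass computation and every current inequality is transparent on the dense open set where $f$ is an honest covering, and the real work lies along the branch and ramification divisors. Concretely, one must verify that the pluripolar exceptional loci contribute no non-pluripolar mass in part (i), and that the generically valid inequality $f_*\omega_Y\ge c\,\omega_X$ (resp. the pseudoeffectivity of $f^*f_*\beta-\beta$) persists as an inequality of currents on all of $X$ (resp. $Y$). The latter rests on the standard fact that a closed positive current of finite global mass which dominates a K\"ahler form off an analytic set extends that domination across the set.
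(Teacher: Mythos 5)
First, a point of comparison: the paper does not prove this statement at all --- it is imported verbatim from Boucksom \cite[Proposition 4.12]{Bou02} --- so your proposal can only be measured against the standard argument in the literature. Your overall architecture (volumes/non-pluripolar masses for one direction, push-forward of K\"ahler currents for the other) is indeed the standard one, and your reduction of (ii) to the single inequality $f_*\omega_Y\ge c\,\omega_X$ is exactly right. But there are two genuine gaps.

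The first is in part (i). The \'etale-locus computation you describe only yields $\vol(f^*\alpha)\ge d\cdot\vol(\alpha)$: pulling back a minimal-singularity (or K\"ahler) current of $\alpha$ gives a current in $f^*\alpha$ whose non-pluripolar mass is $d$ times as large, and the volume of $f^*\alpha$ is at least that. This proves ``$\alpha$ big $\Rightarrow f^*\alpha$ big''. The asserted identity $\langle(f^*\alpha)^n\rangle=f^*\langle\alpha^n\rangle$, i.e.\ the reverse inequality $\vol(\alpha)\ge\tfrac1d\vol(f^*\alpha)$, presupposes that the pull-back of a current with minimal singularities again has minimal singularities in $f^*\alpha$; that is precisely the hard content (for modifications it is the birational invariance of the volume, itself a theorem of Boucksom), and ``integrating along the covering'' does not deliver it. The ``only if'' direction of (i) should instead be proved the same way as (ii): a K\"ahler current $T\ge\varepsilon\,\omega_Y$ in $f^*\alpha$ pushes forward to $\tfrac1d f_*T\in\alpha$ with $f_*T\ge\varepsilon f_*\omega_Y$, so everything again hinges on $f_*\omega_Y$ being a K\"ahler current.

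The second gap is that this pivotal inequality $f_*\omega_Y\ge c\,\omega_X$ is only supported by a one-variable local model and the phrase ``no positivity is lost''. That is not a proof: the issue is the \emph{uniformity} of the constant as the local inverse branches degenerate near the branch locus, and your branch-by-branch picture breaks down entirely over the positive-dimensional fibers that a generically finite surjective map may well have (e.g.\ a blow-down). The extension-across-analytic-sets principle you invoke at the end is fine, but it only matters once a uniform bound off the bad set is in hand. Fortunately there is a two-line argument that avoids all local analysis: by compactness $f^*\omega_X\le C\,\omega_Y$ as smooth forms on $Y$, and $f_*f^*\omega_X=d\,\omega_X$ as currents (change of variables over the \'etale locus, whose complement is Lebesgue-null); since $f_*$ preserves positivity, pushing forward the first inequality gives $d\,\omega_X\le C\,f_*\omega_Y$, i.e.\ $f_*\omega_Y\ge(d/C)\,\omega_X$. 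With this lemma in place your decomposition $f_*S=\varepsilon f_*\omega_Y+f_*(S-\varepsilon\omega_Y)$ completes (ii), and the corrected argument above completes (i).
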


The following proposition is stated for compact Kähler manifolds, whereas the original statement is in Bott--Chern cohomology for general compact complex manifolds.

\begin{prop}[{\cite[Théorème 1]{Pau98}}]\label{prop:surjnef}
Let $f\colon Y\to X$ be a surjective holomorphic map between compact Kähler manifolds. Let $\alpha$ be a real $(1,1)$-cohomology class on $X$. Then, the class $f^*\alpha$ is nef if and only if $\alpha$ is.
\end{prop}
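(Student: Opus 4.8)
The plan is to rely on the analytic description of nefness: having fixed Kähler forms $\omega_X$ on $X$ and $\omega_Y$ on $Y$, a real $(1,1)$-class is nef precisely when, for every $\varepsilon>0$, it admits a smooth representative bounded below by $-\varepsilon$ times the fixed Kähler form (nef being the closure of the Kähler cone). The two implications are then of very different natures, and I would dispatch the easy one first.

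\emph{Nefness pulls back.} If $\alpha$ is nef, pick for each $\varepsilon>0$ a smooth representative $\theta_\varepsilon\in\alpha$ with $\theta_\varepsilon\ge-\varepsilon\,\omega_X$. Then $f^*\theta_\varepsilon$ represents $f^*\alpha$ and $f^*\theta_\varepsilon\ge-\varepsilon\,f^*\omega_X$. Since $f$ is holomorphic, $f^*\omega_X$ is a smooth semipositive $(1,1)$-form, and compactness of $Y$ yields a constant $C>0$ with $f^*\omega_X\le C\,\omega_Y$. Hence $f^*\theta_\varepsilon\ge-C\varepsilon\,\omega_Y$ for all $\varepsilon$, so $f^*\alpha$ is nef. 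This uses only that a pullback of a positive form is positive together with the compactness bound.

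\emph{The substantial converse.} Assume $f^*\alpha$ is nef and fix a smooth $\theta\in\alpha$. For each $\varepsilon$ the $dd^c$-lemma on the compact Kähler manifold $Y$ gives $\varphi_\varepsilon\in C^\infty(Y)$ with $f^*\theta+dd^c\varphi_\varepsilon\ge-\varepsilon\,\omega_Y$, and the task is to descend this to a potential on $X$. Writing $d=\dim Y-\dim X$ for the generic fibre dimension, the natural candidate is the fibre integral $\theta_\varepsilon:=\tfrac1c\,f_*\!\big((f^*\theta+dd^c\varphi_\varepsilon)\wedge\omega_Y^{\,d}\big)$, where $c=\int_{Y_x}\omega_Y^{\,d}>0$. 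The projection formula, together with the fact that $dd^c\varphi_\varepsilon\wedge\omega_Y^{\,d}=dd^c(\varphi_\varepsilon\,\omega_Y^{\,d})$ pushes forward to a $dd^c$-exact current, identifies the class of $\theta_\varepsilon$ with $\alpha$. Moreover, wedging the semipositive form $f^*\theta+dd^c\varphi_\varepsilon+\varepsilon\,\omega_Y$ with the strongly positive $\omega_Y^{\,d}$ and using that fibre integration preserves positivity gives $\theta_\varepsilon\ge-\tfrac{\varepsilon}{c}\,f_*(\omega_Y^{\,d+1})$. Over the Zariski-open set $U\subseteq X$ where $f$ is a submersion, $\theta_\varepsilon$ is smooth and $f_*(\omega_Y^{\,d+1})$ is a smooth positive form bounded by $C\,\omega_X$, so there $\theta_\varepsilon\ge-C\varepsilon\,\omega_X$: the desired bound holds away from the bad locus.

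The main obstacle is everything that happens over the complement $S=X\setminus U$, the proper analytic locus where $f$ fails to be a submersion. There the fibre integral acquires genuine singularities, so globally the construction only produces a closed positive current representing $\alpha+\tfrac{C\varepsilon}{c}\{f_*(\omega_Y^{\,d+1})\}$; as $\varepsilon\to0$ this yields at best the pseudoeffectivity (or bigness, cf.\ Proposition \ref{prop:genfinbig}) of $\alpha$, not its nefness. Equivalently, the fibrewise analysis shows only that $\varphi_\varepsilon$ is within $O(\varepsilon)$ in $C^0$ of a function pulled back from $X$, and such zeroth-order control never delivers the second-order (curvature) estimate that nefness demands. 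Upgrading the positive current to a true family of globally smooth, uniformly $\varepsilon$-positive representatives is the heart of the statement, and is exactly where the Kähler hypothesis on $X$ must be exploited decisively — through a regularization procedure that preserves the lower bound as $\varepsilon\to0$, which is the content of P\u aun's theorem.
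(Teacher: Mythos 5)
The paper offers no proof of this proposition: it is imported verbatim from \cite[Th\'eor\`eme 1]{Pau98}, the authors only remarking that the original statement is formulated in Bott--Chern cohomology for arbitrary compact complex manifolds. So the only question is whether your argument establishes the statement on its own, and it does not. The first half is fine: the pullback direction follows exactly as you say from the bound $f^*\omega_X\le C\,\omega_Y$ on the compact manifold $Y$. The problem is the converse, which is the entire content of the proposition.

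Your fibre-integration construction produces, as you yourself observe, only a closed $(1,1)$-current $T_\varepsilon$ in the class $\alpha$ satisfying $T_\varepsilon\ge-\tfrac{\varepsilon}{c}\,f_*(\omega_Y^{\,d+1})$, where the right-hand side is a closed positive current that is smooth and comparable to $\omega_X$ only over the submersion locus and can be genuinely singular along $S=X\setminus U$. In the limit $\varepsilon\to 0$ this gives pseudoeffectivity of $\alpha$, not nefness: nefness demands, for each $\varepsilon$, a \emph{smooth} representative bounded below by $-\varepsilon\,\omega_X$ everywhere, including over $S$. At precisely this point you write that the required regularization \lq\lq is the content of P\u{a}un's theorem\rq\rq{} --- but that theorem \emph{is} the proposition you are asked to prove, so the argument becomes circular at the decisive step. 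Bridging the gap requires a mechanism of a different nature (in \cite{Pau98} the converse is obtained by induction on dimension, via a characterization of nef classes through their restrictions to analytic subsets together with a gluing of local quasi-plurisubharmonic potentials), none of which is present or sketched in your proposal. As written, you have proved the easy implication and accurately located, but not closed, the difficulty in the hard one.
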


\begin{rem}\label{rem:pushnef}
Observe that, in the equidimensional case as in Proposition~\ref{prop:genfinbig}, an analogous statement for push-forwards of nef classes is not true in general as soon as $\dim X=\dim Y\ge 3$ (but it is for surfaces). 

Indeed, take any bimeromorphic map $f\colon X\dasharrow Y$ which is an isomorphism in codimension one. Then, if $\alpha$ is a nef class on $X$ the push-forward $f_*\alpha$ is nef in codimension one but never nef unless $f$ is an actual biholomorphism. This can be seen resolving as usual the singularities of the closure of the graph of $f$ in $X\times Y$ to obtain the following diagram
$$
\xymatrix{
& Z \ar[dl]_p\ar[dr]^q& \\
X \ar@{-->}[rr]_f& & Y
}
$$
where $p$ and $q$ are modifications and every divisor contracted by $p$ is also contracted by $q$. 

The same diagram shows that, while $p^*\alpha$ is obviously nef, $q_*(p^*\alpha)=f_*\alpha$ is not nef, as claimed (we refer the reader to S. Boucksom's PhD thesis for more details on this). 
\end{rem}

\subsection{Brunnbauer, Kotschick, and Schönlinner result on hyperbolic classes}
Here we want to state \cite[Theorem 2.4]{BKS24}, which will be the crucial ingredient for the proof of birationality of weak Kähler hyperbolicity. So, let us introduce some terminology following \cite{BKS24}. 

First, for $X$ a topological space, we define the aspherical subspace $V^k_\textrm{asph}(X)$ of the real singular cohomology $H^k(X,\mathbb R)$ to be the set of $k$-cohomology classes whose pull-back to any sphere is zero. 

Next, for $X$ a finite simplicial complex, define the hyperbolic subspace $V^k_\textrm{hyp}(X)$ of the real singular cohomology $H^k(X,\mathbb R)$ to be the set of $k$-cohomology classes for which there exists a representing closed $k$-form $\omega$ whose pull-back to the universal cover has a $d$-primitive which is bounded with respect to some lifted Riemannian metric, \textsl{i.e.} $\omega$ is $\tilde d$-bounded. Here, we are using that there is a natural isomorphism $H^k_\textrm{dR}(X,\mathbb R)\simeq H^k(X,\mathbb R)$ between the de Rham cohomology and the singular cohomology.

\begin{rem}
A $k$-form $\omega$ on a simplicial complex $X$ consists of a smooth $k$-form $\omega_\sigma$ for every simplex $\sigma\subset X$ such that $\omega_\sigma|_\tau\equiv\omega_\tau$ whenever $\tau\subset\sigma$ is a subsimplex. 

Analogously to the definition of differential forms, one defines Riemannian metrics for simplicial complexes as the choice of a Riemannian metric $g_\sigma$ on every simplex $\sigma$ of $X$ such that $g_\sigma|_\tau\equiv g_\tau$ for $\tau\subset\sigma$.

We refer to \cite{Swa75} for more details.
\end{rem}

\begin{rem}
In our terminology, for $X$ a compact Kähler manifold, we thus have that $\mathcal W_X$ equals the intersection of $V^2_\textrm{hyp}(X)$ with the cone of big and nef classes.
\end{rem}

Note that $V^k_\textrm{hyp}(X)\subset V^k_\textrm{asph}(X)$ as soon as $k\ge 2$, since every continuous map from a $k$-dimensional sphere $f\colon S^k\to X$ factorizes through the universal cover of $X$.

Now, take a finite simplicial complex $X$ with fundamental group $G:=\pi_1(X)$ and consider (a model of) the classifying space $EG\to BG$. Given a universal covering $\tilde X\to X$, there is unique (up to homotopy) classifying map of this universal covering
$$
c_{\tilde X,X}\colon X\to BG
$$
such that $\tilde X$ is isomorphic to the pull-back $c_{\tilde X,X}^* EG$ as $G$-principal bundles. 

Finally, following \cite[Subsection 0.2.C]{Gro91}, we want to define the hyperbolic subspace for $BG$ which does not necessarily have any longer the homotopy type of a finite simplicial complex. So, the subspace $V^k_\textrm{hyp}(BG)$ of the real singular cohomology $H^k(BG,\mathbb R)$ will be the set of $k$-cohomology classes whose pull-back to any finite simplicial complex is hyperbolic.

We are now ready to state the following theorem, which is the key for the birationality of weak Kähler hyperbolicity.

\begin{thm}[{\cite[Theorem 2.4]{BKS24}, see also \cite[Theorem 5.1]{Ked09}}]\label{thm:keyhyperbolic}
Let $X,Y$ be two finite simplicial complex (\textsl{e.g.} compact complex manifolds), $c_{\tilde X,X}\colon X\to B\pi$ the classifying map of the universal covering $\tilde X\to X$, and $f\colon Y\to B\pi$ an arbitrary continuous map. 

If $w\in H^2(B\pi,\mathbb R)$ is a cohomology class such that $c_{\tilde X,X}^*w\in V^2_{\hyp}(X)$, then $f^*w\in V^2_{\hyp}(Y)$. 
\end{thm}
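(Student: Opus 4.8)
The plan is to deduce the statement from the single assertion that $w\in V^2_{\hyp}(B\pi)$. Once this is established, the conclusion is purely formal: by the very definition of the hyperbolic subspace of $B\pi$, a class in $V^2_{\hyp}(B\pi)$ pulls back to a hyperbolic class under \emph{every} continuous map from \emph{every} finite simplicial complex, and applying this to $f\colon Y\to B\pi$ gives $f^*w\in V^2_{\hyp}(Y)$. The whole content therefore lies in the implication
\[
c_{\tilde X,X}^*w\in V^2_{\hyp}(X)\ \Longrightarrow\ w\in V^2_{\hyp}(B\pi),
\]
whose point is that a \emph{single} map $c_{\tilde X,X}$, inducing an isomorphism on $\pi_1$, already detects the hyperbolicity of $w$ against all test complexes.

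To prove this implication, I would fix an arbitrary finite simplicial complex $Z$ with $\Gamma:=\pi_1(Z)$ and an arbitrary continuous map $g\colon Z\to B\pi$, and set $\psi:=g_*\colon\Gamma\to\pi$. Since $B\pi$ is a $K(\pi,1)$, the map $g$ is determined up to homotopy by $\psi$ and factors as $g\simeq B\psi\circ c_{\tilde Z,Z}$, whence $g^*w=c_{\tilde Z,Z}^*(B\psi)^*w$; the task is to show this class is $\tilde d$-bounded on $Z$. The hypothesis is exactly this statement in the special case $(Z,\psi)=(X,\mathrm{id}_\pi)$, because $c_{\tilde X,X}$ induces an isomorphism on fundamental groups. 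Thus everything reduces to \emph{propagating} $\tilde d$-boundedness from the identity homomorphism of $\pi$ to an arbitrary homomorphism $\psi\colon\Gamma\to\pi$.

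I would carry out this propagation at the level of the groups, using the quasi-isometric rigidity of finite complexes. By the Milnor--\v{S}varc lemma, the deck-invariant lift of a metric to the universal cover of a finite complex is quasi-isometric to the word metric on its fundamental group. Over $\tilde X$, the hypothesis provides a bounded primitive $\eta$ of a ($\pi$-invariant) representative $\tilde\omega$ of $c_{\tilde X,X}^*w$. Integrating $\eta$ and $\tilde\omega$ over geodesic $1$- and $2$-simplices with vertices in the orbit $\pi\cdot x_0$ of a basepoint produces, on the simplicial model $E\pi$, a $\pi$-invariant $2$-cocycle representing $w$ together with a (non-invariant) primitive $1$-cochain $b$ whose growth is controlled by $\|\eta\|_\infty$, hence \emph{at most linear} in the word length. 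Such a linear-growth primitive is the intrinsic, group-theoretic shadow of $\tilde d$-boundedness (cf. \cite[0.2.C]{Gro91}), and it is manifestly functorial: precomposition with $\psi$ yields $b\circ\psi$ on $E\Gamma$, still of at most linear growth because a homomorphism sends generators to elements of bounded word length and is therefore coarsely Lipschitz, while the pulled-back invariant $2$-cocycle represents $(B\psi)^*w$. Transporting back through Milnor--\v{S}varc on the universal cover of $Z$, the cochain $b\circ\psi$ is converted into a \emph{bounded} primitive on $\tilde Z$ for a representative of $g^*w$, which is precisely the assertion $g^*w\in V^2_{\hyp}(Z)$.

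The main obstacle is this third step: making rigorous the dictionary between a bounded primitive on the universal cover and a group primitive of linear growth, together with its functoriality. The delicate points are genuine. The classifying map admits no lift $Z\to X$ in general --- the complexes need not be aspherical, so one cannot simply replace $B\pi$ by $X$ and must argue on $E\pi$ (or the bar complex) instead; the homomorphism $\psi$ may be neither injective nor surjective, so no map between the covers $\tilde Z$ and $\tilde X$ is available to transport forms directly; and one must reconstruct an honestly \emph{bounded} differential form on $\tilde Z$ out of a cochain of linear growth, by correcting an arbitrary primitive by a controlled exact term. These are exactly the places where the formalism of \cite[0.2.C]{Gro91} and the arguments of \cite{BKS24} and \cite{Ked09} do the substantive work; the reductions of the first two paragraphs are formal.
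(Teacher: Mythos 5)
This statement is not proved in the paper at all: it is imported verbatim from \cite[Theorem 2.4]{BKS24} (see also \cite[Theorem 5.1]{Ked09}), so there is no internal argument to measure yours against. Judged on its own terms, your first two reductions are correct and are indeed the purely formal part: the conclusion is equivalent to $w\in V^2_{\hyp}(B\pi)$ by the very definition of the hyperbolic subspace of $B\pi$, and since $B\pi$ is aspherical any $g\colon Z\to B\pi$ factors up to homotopy as $B\psi\circ c_{\tilde Z,Z}$ with $\psi=g_*$ (the conjugation ambiguity is harmless for classes in $H^2(B\pi,\mathbb R)$). You also correctly locate all the content in a quasi-isometry\--invariant, group-theoretic reformulation of $\tilde d$-boundedness and its functoriality under an arbitrary homomorphism $\psi\colon\Gamma\to\pi$, which is genuinely the strategy of \cite{Ked09} and \cite{BKS24}.

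The gap is in that third step, and you acknowledge it, but let me make it concrete. \lq\lq Primitive of at most linear growth\rq\rq{} is not the invariant that does the job. A $1$-cochain $b$ on $\pi$ with $|b(g)|\le C(1+|g|)$ need not have bounded increments $|b(g)-b(gs)|$ over generators $s$ (already on $\mathbb Z$ the function $b(n)=n\sin n$ has linear growth and unbounded increments), and it is the bounded-increment (Lipschitz) condition --- or, in homogeneous terms, a bound $|B(g,h)|\le C\,d(g,h)$ on a not-necessarily-invariant primitive, which is what integrating the bounded form $\eta$ over geodesic segments actually produces --- that both survives precomposition with a homomorphism (homomorphisms being Lipschitz for word metrics) and converts back into a bounded $1$-form primitive on $\tilde Z$. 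With linear growth alone that last conversion fails. For the conversion itself, note that the partition-of-unity construction in the proof of Lemma \ref{lem:bounded cohomology} of this paper is exactly the required device: the primitive $\sum c(g,g^{-1}h)\,\chi_{i,g}\,d\chi_{j,h}$ is bounded as soon as $c(g,k)$ is bounded uniformly in $g$ for $k$ ranging over the finite set of elements with $U_i\cap kU_j\ne\emptyset$, which is precisely what the Lipschitz condition on the primitive delivers; boundedness of the cocycle is not needed. If you replace \lq\lq linear growth\rq\rq{} by this condition and carry out the two transport steps explicitly, your outline becomes a proof; as written, the decisive implication is still delegated to the references.
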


\begin{rem}\label{rem:finitelypresented}
Consider a finitely presented group $G$. Such a group can be realized as the fundamental group of a compact, connected, smooth manifold $X$ of dimension 4 (or higher). Such a manifold can always be triangulated, \textsl{i.e.} it is homeomorphic to a finite simplicial complex. 

What the theorem says in particular is that in order to detect which classes $w\in H^2(BG,\mathbb R)$ are hyperbolic, it is sufficient to check that the pull-back of $w$ to $X$ via the classifying map of its universal covering is hyperbolic.
\end{rem}

Here is a fundamental consequence for us, which is mentioned in \cite{BKS24}, and for which we reproduce a proof here.

\begin{cor}\label{cor:keyhyperbolic}
For the hyperbolic subspace in degree $2$ of a finite simplicial complex $X$, we have
$$
c_{\tilde X,X}^*\bigl(V^2_{\hyp}(BG)\bigr)=V^2_{\hyp}(X).
$$ 
\end{cor}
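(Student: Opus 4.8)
The plan is to prove the two inclusions separately. The inclusion $c_{\tilde X,X}^*\bigl(V^2_{\hyp}(BG)\bigr)\subseteq V^2_{\hyp}(X)$ is immediate from the definition of $V^2_{\hyp}(BG)$: since $X$ is a finite simplicial complex and $c_{\tilde X,X}\colon X\to BG$ is a continuous map, the pull-back by $c_{\tilde X,X}$ of any class in $V^2_{\hyp}(BG)$ is hyperbolic on $X$ by the very definition of $V^2_{\hyp}(BG)$ as the set of classes whose pull-back to every finite simplicial complex is hyperbolic.

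For the reverse inclusion $V^2_{\hyp}(X)\subseteq c_{\tilde X,X}^*\bigl(V^2_{\hyp}(BG)\bigr)$, fix $\eta\in V^2_{\hyp}(X)$. The first step is to produce a class $w\in H^2(BG,\mathbb R)$ with $c_{\tilde X,X}^*w=\eta$. To this end I would combine the inclusion $V^2_{\hyp}(X)\subseteq V^2_{\textrm{asph}}(X)$ recorded above with the identification of the aspherical subspace with the image of $c_{\tilde X,X}^*$. Concretely, writing $p\colon\tilde X\to X$ for the universal covering, a degree-two class is aspherical if and only if it lies in $\ker\bigl(p^*\colon H^2(X,\mathbb R)\to H^2(\tilde X,\mathbb R)\bigr)$: every map $S^2\to X$ lifts to $\tilde X$ since $S^2$ is simply connected, and conversely, $\tilde X$ being simply connected, the Hurewicz theorem forces a class on $\tilde X$ that vanishes on all spheres to vanish. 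Then the five-term exact sequence of the Cartan--Leray (equivalently Serre) spectral sequence of the covering $\tilde X\to X$, using $H^1(\tilde X,\mathbb R)=0$, shows that $c_{\tilde X,X}^*$ is injective in degree two and that its image equals $\ker(p^*)$. Hence $\eta\in V^2_{\textrm{asph}}(X)=\im c_{\tilde X,X}^*$, and there is a unique $w$ with $c_{\tilde X,X}^*w=\eta$.

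The second step, where the main input enters, is to verify that this $w$ belongs to $V^2_{\hyp}(BG)$. Here I would invoke Theorem \ref{thm:keyhyperbolic} directly: given any finite simplicial complex $Y$ and any continuous map $f\colon Y\to BG$, the hypothesis $c_{\tilde X,X}^*w=\eta\in V^2_{\hyp}(X)$ guarantees $f^*w\in V^2_{\hyp}(Y)$. As $Y$ and $f$ are arbitrary, this is precisely the statement that $w\in V^2_{\hyp}(BG)$. Therefore $\eta=c_{\tilde X,X}^*w\in c_{\tilde X,X}^*\bigl(V^2_{\hyp}(BG)\bigr)$, completing the second inclusion and hence the proof.

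The step I expect to be the main obstacle is the surjectivity statement $V^2_{\textrm{asph}}(X)=\im c_{\tilde X,X}^*$ needed to produce the preimage $w$; once $w$ is in hand, its hyperbolicity on $BG$ is handed to us by Theorem \ref{thm:keyhyperbolic}, whose universal quantification over all finite simplicial complexes matches the definition of $V^2_{\hyp}(BG)$ exactly. A minor technical point to keep in mind is the use of the universal coefficient theorem over $\mathbb R$ for the possibly non-compact cover $\tilde X$ when identifying aspherical classes with $\ker(p^*)$.
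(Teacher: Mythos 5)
Your proof is correct, and the skeleton is the same as the paper's: the forward inclusion is definitional, and the reverse inclusion goes through asphericity of hyperbolic classes, the identification of $V^2_{\mathrm{asph}}(X)$ with the image of $c_{\tilde X,X}^*$, and then Theorem \ref{thm:keyhyperbolic} to certify that the preimage $w$ lies in $V^2_{\hyp}(BG)$. Where you genuinely diverge is in the proof of the identification $\im c_{\tilde X,X}^*=V^2_{\mathrm{asph}}(X)$, which the paper isolates as Lemma \ref{lem:imagec^*}. The paper builds a model of $BG$ by attaching cells of dimension $\ge 3$ to $X$, so that $X$ and $BG$ share a $2$-skeleton, and then reads off injectivity and surjectivity onto aspherical classes from the long exact sequence of the pair $(BG,X)$ together with an explicit computation of the connecting homomorphism on relative $3$-cycles. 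You instead characterize $V^2_{\mathrm{asph}}(X)$ as $\ker\bigl(p^*\colon H^2(X,\mathbb R)\to H^2(\tilde X,\mathbb R)\bigr)$ (using lifting of sphere maps one way and Hurewicz plus universal coefficients over the field $\mathbb R$ the other way), and then extract both injectivity of $c_{\tilde X,X}^*$ and $\im c_{\tilde X,X}^*=\ker(p^*)$ from the five-term exact sequence of the Cartan--Leray spectral sequence, using $H^1(\tilde X,\mathbb R)=0$. Both routes are sound; the paper's is more hands-on and self-contained at the chain level, while yours is shorter once the spectral-sequence machinery is granted and makes transparent that the statement only depends on the covering $\tilde X\to X$. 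You were also right to flag the universal coefficient step for the non-compact cover as the point needing care: it goes through precisely because $\mathbb R$ is a field, so $H^2(\tilde X,\mathbb R)\simeq\operatorname{Hom}(H_2(\tilde X,\mathbb R),\mathbb R)$ with no finiteness hypothesis.
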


In particular, we see that $V^2_{\hyp}(X)$ depends only on the fundamental group of $X$, since different (that is, relative to different $G$-principal bundle structures) classifying maps of the universal covering yields the same image.

During the proof we shall need the following fact, surely well-known to experts.

\begin{lem}\label{lem:imagec^*}
The pull-back $c_{\tilde X,X}^*\colon H^2(BG,\mathbb R)\hookrightarrow H^2(X,\mathbb R)$ is injective and, more importantly,
$$
c_{\tilde X,X}^*\bigl(H^2(BG,\mathbb R)\bigr)=V^2_\mathrm{asph}(X).
$$ 
\end{lem}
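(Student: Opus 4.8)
The plan is to reduce the whole statement to the classical Hopf exact sequence together with elementary linear duality over $\mathbb{R}$. The first step is to reinterpret the aspherical subspace homologically. Since a degree-$2$ class restricts to zero on $S^k$ automatically whenever $k\neq 2$ (as $H^2(S^k,\mathbb{R})=0$), a class $\alpha\in H^2(X,\mathbb{R})$ lies in $V^2_{\mathrm{asph}}(X)$ if and only if $f^*\alpha=0$ for every continuous $f\colon S^2\to X$. By compatibility of the evaluation pairing with $f_*$ and $f^*$, this is equivalent to $\langle\alpha,f_*[S^2]\rangle=0$ for all such $f$; and as $f$ ranges over all maps $S^2\to X$ the classes $f_*[S^2]$ range exactly over the image of the Hurewicz homomorphism $h\colon\pi_2(X)\to H_2(X,\mathbb{Z})$, since $h([f])=f_*[S^2]$. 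Writing $\mathrm{im}\,h_{\mathbb{R}}\subset H_2(X,\mathbb{R})$ for the real span of this image, I obtain
$$
V^2_{\mathrm{asph}}(X)=(\mathrm{im}\,h_{\mathbb{R}})^{\perp},
$$
the annihilator under the perfect pairing $H^2(X,\mathbb{R})\times H_2(X,\mathbb{R})\to\mathbb{R}$.

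Next I would feed in Hopf's theorem, which provides the exact sequence
$$
\pi_2(X)\xrightarrow{\ h\ }H_2(X,\mathbb{Z})\xrightarrow{\ (c_{\tilde X,X})_*\ }H_2(BG,\mathbb{Z})\to 0,
$$
the second arrow being induced by the classifying map (which induces the identity on $\pi_1$, and $BG=K(G,1)$, so it is exactly the Hopf map). Tensoring with the flat $\mathbb{Z}$-module $\mathbb{R}$ preserves exactness, so $(c_{\tilde X,X})_*\colon H_2(X,\mathbb{R})\to H_2(BG,\mathbb{R})$ is surjective with kernel precisely $\mathrm{im}\,h_{\mathbb{R}}$.

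Finally I would conclude by dualizing. Over $\mathbb{R}$ the universal coefficient theorem identifies $H^2(-,\mathbb{R})$ with $\mathrm{Hom}_{\mathbb{R}}(H_2(-,\mathbb{R}),\mathbb{R})$, under which $c_{\tilde X,X}^*$ is the transpose of $(c_{\tilde X,X})_*$. The transpose of a surjection is injective, yielding the asserted injectivity of $c_{\tilde X,X}^*$; and for any linear map the image of the transpose equals the annihilator of the kernel (extension of functionals from a subspace costs nothing), so
$$
\mathrm{im}\bigl(c_{\tilde X,X}^*\bigr)=(\ker (c_{\tilde X,X})_*)^{\perp}=(\mathrm{im}\,h_{\mathbb{R}})^{\perp}=V^2_{\mathrm{asph}}(X).
$$

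The one point deserving care — the main obstacle to a clean write-up rather than a conceptual one — is the first step: justifying that testing against all $f\colon S^2\to X$ is the same as pairing against the image of the Hurewicz map, and that this spherical image coincides with $\ker (c_{\tilde X,X})_*$. This last coincidence is exactly the content of Hopf's theorem (equivalently, of the five-term exact sequence of the fibration $\tilde X\to X\to BG$, whose fiber is the universal cover since $c_{\tilde X,X}$ is a $\pi_1$-isomorphism and $BG$ is aspherical). The real work therefore lies in invoking this correctly and keeping the duality bookkeeping straight; everything else is formal. Note also that injectivity is here a byproduct of the same mechanism, so both assertions of the lemma follow at once.
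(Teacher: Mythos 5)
Your proof is correct, and it takes a genuinely different (though closely related) route from the paper. The paper builds a model of $BG$ by attaching cells of dimension $\ge 3$ to $X$, so that $c_{\tilde X,X}$ becomes an inclusion and $X$ and $BG$ share their $2$-skeleton; injectivity then drops out of the long exact cohomology sequence of the pair $(BG,X)$ (since $H^2(BG,X;\mathbb R)=0$), and the identification of the image is obtained by evaluating the connecting map on relative $3$-chains, whose boundaries are the attached $2$-spheres. You instead dualize the Hopf exact sequence $\pi_2(X)\to H_2(X,\mathbb Z)\to H_2(G,\mathbb Z)\to 0$ over $\mathbb R$. These are essentially two presentations of the same topology --- the standard proof of Hopf's theorem is precisely the cell-attachment the paper carries out --- but your version outsources the geometric input to a named classical theorem and then obtains both assertions at once from linear duality, including the easy inclusion $c_{\tilde X,X}^*\bigl(H^2(BG,\mathbb R)\bigr)\subseteq V^2_{\mathrm{asph}}(X)$, which the paper argues separately via the Hurewicz description of asphericity. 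What the paper's version buys is self-containedness. One cosmetic point: $X\to BG$ is not literally a fibration, so in your parenthetical remark you should speak of the homotopy fiber (which is indeed $\tilde X$ here); this does not affect the argument, since you only use Hopf's theorem itself.
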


\begin{proof}
First, every class in $H^k(BG,\mathbb R)$, $k\ge 2$, is aspherical. This is because, for $Z$ a (nice) topological space, a class in $H^k(Z,\mathbb R)$ is aspherical if and only if its kernel as a linear functional on $H_k(Z,\mathbb R)$ contains the image of the Hurewicz homomorphism $\pi_k(Z)\to H_k(Z,\mathbb R)$, and by definition we have $\pi_k(BG)=0$ as soon as $k\ge 2$. Moreover, the image of an aspherical class belonging to $V^2_\textrm{asph}(BG)=H^2(BG,\mathbb R)$ via the pull-back is obviously aspherical. 

Now, we can construct a model of $BG$ by attaching cells of dimension $3$ or higher to $X$ to make the universal cover contractible without affecting $\pi_1(X)$. So, we can suppose that $X\subset BG$, the classifying map $c_{\tilde X,X}$ is the inclusion, and moreover $X$ and $BG$ share the same $2$-skeleton. In particular, the relative homology $H_2(BG,X,\mathbb R)$ vanishes and by duality so does $H^2(BG,X,\mathbb R)$. We now form the long exact sequence of the pair $X\subset BG$ in cohomology, to get
$$
0\to H^2(BG,\mathbb R)\to H^2(X,\mathbb R)\overset\partial\to H^3(BG,X,\mathbb R),
$$
where $\partial$ is the connecting homomorphism, which gives injectivity at once. It only remains to prove that if $w\in V^2_\textrm{asph}(X)$, then $\partial (w)=0$. This is seen directly, since $\partial (w)\in H_3(BG,X,\mathbb R)^*$ acts on a relative $3$-cycle $\gamma+C_3(X)$ by $\partial (w)\bigl(\gamma+C_3(X)\bigr)=w(\partial_3\gamma)$, where $\partial_3$ is the boundary operator and $\partial_3\gamma\in Z_2(X)$ is a $2$-cycle in $X$ which, if non trivial, comes from a $3$-cell attached to $X$ so that $\partial_3\gamma$ is homeomorphic to $S^2$. Therefore, $w(\partial_3\gamma)=0$ since $w$ is aspherical.
\end{proof}

\begin{proof}[Proof of Corollary \ref{cor:keyhyperbolic}]
We have by definition $c_{\tilde X,X}^*\bigl(V^2_\textrm{hyp}(BG)\bigr)\subset V^2_\textrm{hyp}(X)$. 
For the other inclusion, if $\alpha\in V^2_\textrm{hyp}(X)$, then $\alpha\in V^2_\textrm{asph}(X)$, and thus by Lemma \ref{lem:imagec^*} there exists $w\in H^2(BG,\mathbb R)$ such that $\alpha=c_{\tilde X,X}^*w$, so that the pull-back $c_{\tilde X,X}^*w$ is hyperbolic. But then, by Theorem \ref{thm:keyhyperbolic}, the pull-back of $w$ to any finite simplicial complex is hyperbolic, \textsl{i.e.} $w\in V^2_\textrm{hyp}(BG)$.
\end{proof}

\subsection{Bounded cohomology and hyperbolic classes}

Before stating the next lemma, let us recall that the cohomology of a group $G$ can be computed by means of the usual (inhomogeneous) cochains complex
$$
C^k(G,\R):=\bigl\{c\colon G^k\to \R\bigr\}
$$
together with the differential
\begin{multline*}
(d\,c)(g_1,\ldots,g_{k+1}):=c(g_2,\ldots,g_{k+1}) \\ + \sum_{i=1}^k (-1)^ic(g_1,\ldots,g_{i-1},g_ig_{i+1},g_{i+2},\ldots,g_{k+1}) + (-1)^{k+1}c(g_1,\ldots,g_k).
\end{multline*}
The subspace of bounded cochains
$$
C^k_b(G,\R):=\bigl\{c\colon G^k\to \R\mid\textrm{$c$ is bounded}\bigr\}
$$
is stable under $d$ and gives rise to the complex of bounded cochains; the corresponding cohomology groups
$$
H^k_b(G,\R):=H^k\bigl(C^\bullet_b(G,\R),d\bigr)
$$
are the so-called groups of bounded cohomology (see \cite{FrigerioBook} for a more in-depth discussion). Let us finally observe that the natural inclusion of complexes
$$
C^\bullet_b(G,\R)\hookrightarrow C^\bullet(G,\R)
$$
induces natural maps in cohomology
$$
H^k_b(G,\R)\to H^k(G,\R)\simeq H^k(BG,\R).
$$
As above, we focus on the degree $2$ case and state the following result, possibly well-known to experts.

\begin{lem}\label{lem:bounded cohomology}
For any finitely presented group $G$ we have an inclusion
$$
\operatorname{Im}\bigl(H^2_b(G,\R)\to H^2(G,\R)\bigr)\subset V^2_{\hyp}(BG).
$$
In particular, if $G$ is Gromov hyperbolic, then we have:
$$
H^2(G,\R)=V^2_{\hyp}(BG).
$$
\end{lem}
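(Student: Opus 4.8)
The plan is to establish the first inclusion and then read off the ``in particular'' clause from the surjectivity of the comparison map for hyperbolic groups. Fix $w\in\operatorname{Im}\bigl(H^2_b(G,\R)\to H^2(G,\R)\bigr)$, say $w=\iota(\beta)$ with $\beta\in H^2_b(G,\R)$ and $\iota$ the comparison map. Since $G$ is finitely presented, by Remark \ref{rem:finitelypresented} we may realize $G=\pi_1(X)$ for a closed, triangulable manifold $X$, with classifying map $c:=c_{\tilde X,X}\colon X\to BG$. By Corollary \ref{cor:keyhyperbolic} we have $c^*\bigl(V^2_{\hyp}(BG)\bigr)=V^2_{\hyp}(X)$, and by Lemma \ref{lem:imagec^*} the pull-back $c^*$ is injective. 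Hence it suffices to prove that $c^*w\in V^2_{\hyp}(X)$: the membership $c^*w\in c^*\bigl(V^2_{\hyp}(BG)\bigr)$ together with injectivity then forces $w\in V^2_{\hyp}(BG)$.

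Next I would use naturality of the comparison map. The map $c$ induces an isomorphism on fundamental groups, hence on bounded cohomology, and the square relating $\iota$ on $BG$ and on $X$ commutes, so that $c^*w=c^*\iota(\beta)=\iota_X\bigl(c^*_b\beta\bigr)$ lies in the image of the comparison map $\iota_X\colon H^2_b(X,\R)\to H^2(X,\R)$ of the manifold $X$. Thus everything reduces to the following core statement, essentially due to Gromov: on a closed manifold the image of the comparison map is contained in $V^2_{\hyp}$.

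To prove the core statement, represent $c^*w$ simultaneously by a bounded singular $2$-cocycle $z$ and by a smooth closed $2$-form $\omega$. Lift both to the universal cover $p\colon\tilde X\to X$, endowed with the lifted metric, which has bounded geometry since $X$ is compact. As $\tilde X$ is simply connected, its bounded cohomology vanishes in positive degrees, so the bounded cocycle $p^*z$ is the coboundary of a \emph{bounded} $1$-cochain; in particular $p^*[\omega]=0$, whence $\tilde\omega:=p^*\omega$ is exact. The decisive point is then to upgrade this bounded singular primitive to a bounded \emph{smooth} primitive, i.e.\ to produce $\eta$ with $d\eta=\tilde\omega$ and $\|\eta\|_{L^\infty}<\infty$, which exhibits $\tilde\omega$ as $\tilde d$-bounded and hence $c^*w\in V^2_{\hyp}(X)$. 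I expect this last passage to be the main obstacle: it is carried out through the de Rham comparison implemented by integration over smoothly embedded simplices of uniformly bounded size, a bounded cochain homotopy equivalence on a bounded-geometry manifold, and it is precisely here that uniform estimates rather than mere exactness enter. Alternatively one may invoke Gromov's discussion in \cite[\S0.3]{Gro91} directly.

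Finally, for the ``in particular'' clause, if $G$ is Gromov hyperbolic then the comparison map $H^2_b(G,\R)\to H^2(G,\R)$ is surjective by Mineyev's theorem (indeed this holds in every degree $\ge 2$). Therefore $\operatorname{Im}\bigl(H^2_b(G,\R)\to H^2(G,\R)\bigr)=H^2(G,\R)$, and the first inclusion gives $H^2(G,\R)\subseteq V^2_{\hyp}(BG)$; since $V^2_{\hyp}(BG)\subseteq H^2(BG,\R)=H^2(G,\R)$ by definition, we conclude $H^2(G,\R)=V^2_{\hyp}(BG)$.
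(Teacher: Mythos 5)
Your reduction to a closed manifold $X$ with $\pi_1(X)\simeq G$ (via Remark \ref{rem:finitelypresented}, Lemma \ref{lem:imagec^*} and Corollary \ref{cor:keyhyperbolic}) and your treatment of the ``in particular'' clause via Mineyev's surjectivity theorem both match the paper. The problem is that essentially the whole content of the lemma is concentrated in the step you yourself flag as ``the main obstacle'' and then do not carry out: passing from a bounded \emph{singular} primitive of $p^*z$ on the universal cover to a bounded \emph{smooth} $1$-form $\eta$ with $d\eta=\tilde\omega$. Knowing that $\tilde\omega$ is exact and that $p^*z$ cobounds a bounded singular cochain does not by itself yield a bounded smooth primitive: one must build a cochain homotopy between the de Rham and singular (or simplicial) complexes of $\tilde X$ with \emph{uniform} bounds, using a bounded-geometry triangulation, uniformly controlled straightening/smoothing operators, and so on. That comparison is true, but it is a genuinely technical piece of work of roughly the same depth as the lemma itself, and Gromov's \S0.3 only sketches it; as written, your argument is a reduction to an unproved statement rather than a proof.

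The paper sidesteps the smoothing problem entirely by an explicit construction. Starting from the bounded group cocycle $c$ and a finite partition of unity $\{\chi_i\}_{i\in I}$ on $X$, lifted equivariantly to $\{\chi_{i,g}\}$ on $\tilde X$, it sets
$$
\alpha=\sum_{i,j,g,h} c(g,g^{-1}h)\,d\chi_{i,g}\wedge d\chi_{j,h}
= d\Bigl(\sum_{i,j,g,h} c(g,g^{-1}h)\,\chi_{i,g}\,d\chi_{j,h}\Bigr),
$$
checks $G$-invariance of $\alpha$ directly from the cocycle identity, and reads off boundedness of the displayed primitive from the boundedness of $c$ and the finiteness of $I$; the resulting $2$-form descends to $X$ and represents the pull-back of the class of $c$. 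To complete your own route you would either have to work out the bounded de Rham comparison in full, or replace your smoothing step by a direct construction of this kind.
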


\begin{proof}
According to Remark \ref{rem:finitelypresented}, we can pick any compact differentiable manifold $X$ such that $\pi_1(X)\simeq G$ to check that a bounded $2$-class gives rise to a hyperbolic one. Let $c\in C^2_b(G,\R)$ be a bounded cochain that is a cocycle; it satisfies that for all $(g,h,k)\in G^3$
$$
c(g,h)=c(h,k)-c(gh,k)+c(g,hk).
$$
We can rewrite it in the following form: for all $(g,h,\gamma)\in G^3$
\begin{equation}\label{eq:cocyle}
c(\gamma^{-1}g,g^{-1}h)=c(g,g^{-1}h)-c(\gamma,\gamma^{-1}h)+c(\gamma,\gamma^{-1}g).
\end{equation}
We now explain how to construct an exact $2$-form $\alpha$ defined on the universal covering $\tilde{X}$ which has a bounded primitive, is invariant under the action of the group $G=\pi_1(X)$, and corresponds to the pull-back to $\tilde X$ of the cochain $c$. These properties together ensure that $\alpha$ descends to $X$ and gives a hyperbolic class in $H^2(X,\R)$ which is the pull-back of $c$.

To do so, let us fix a finite open cover $\{V_i\}_{i\in I}$ of $X$ with $V_i$ simply connected. It follows that the $V_i$'s are evenly covered by the universal covering map $p\colon\tilde{X}\to X$. We can thus choose one connected component $U_i$ of $p^{-1}(V_i)$ and get
$$
p^{-1}(V_i)=\bigcup_{g\in G}gU_i.
$$
Finally, let us denote by $\{\chi_i\}_{i\in I}$ a partition of unity subordinate to the open covering $\{V_i\}_{i\in I}$; it induces a partition of unity on $\tilde{X}$, to be denoted by $\{\chi_{i,g}\}_{(i,g)\in I\times G}$, subordinated to the open covering $\{gU_i\}_{(i,g)\in I\times G}$. The function $\chi_{i,g}$ is supported in $gU_i$ and is nothing but $\chi_{i}\circ p|_{gU_i}$. This family enjoys the obvious equivariance property that for all $(\gamma,g)\in G^2$ and for all $i\in I$
\begin{equation}\label{eq:equivariance-partition}
\gamma^*\chi_{i,g}=\chi_{i,\gamma g}.
\end{equation}
With this in hand, let us consider
$$
\alpha:=\sum_{\substack{(i,j)\in I^2 \\ (g,h)\in G^2}} c(g,g^{-1}h)\,d\chi_{i,g}\wedge d\chi_{j,h}
=d\biggl(\sum_{\substack{(i,j)\in I^2 \\ (g,h)\in G^2}} c(g,g^{-1}h)\,\chi_{i,g}\wedge d\chi_{j,h}\biggr).
$$
The last equality implies easily that $\alpha$ has a bounded primitive since $c$ is a bounded cochain, the $\chi_{i,g}$'s come from downstairs and $I$ is finite. Now, let us check that $\alpha$ is invariant under the action of $\gamma\in G$. We have
\begin{align}
\gamma^*\alpha&=\sum_{\substack{(i,j)\in I^2 \\ (g,h)\in G^2}} c(g,g^{-1}h)\,\gamma^*\left(d\chi_{i,g}\wedge d\chi_{j,h}\right) \nonumber \\
&=\sum_{\substack{(i,j)\in I^2 \\ (g,h)\in G^2}} c(g,g^{-1}h)\,d\chi_{i,\gamma g}\wedge d\chi_{j,\gamma h} \label{eq:computation0} \\
&=\sum_{\substack{(i,j)\in I^2 \\ (g,h)\in G^2}} c(\gamma^{-1}g,g^{-1}h)\,d\chi_{i,g}\wedge d\chi_{j,h} \label{eq:computation1} \\
&=\sum_{\substack{(i,j)\in I^2 \\ (g,h)\in G^2}} \bigl(c(g,g^{-1}h)-c(\gamma,\gamma^{-1}h)+c(\gamma,\gamma^{-1}g)\bigr)\,d\chi_{i,g}\wedge d\chi_{j,h} \label{eq:computation2} \\
&=\alpha. \nonumber
\end{align}
Indeed, the equality \eqref{eq:computation0} is just the equivariance property \eqref{eq:equivariance-partition}, equality \eqref{eq:computation1} is a change of variables, and equality \eqref{eq:computation2} is nothing but the cocycle property \eqref{eq:cocyle}. To see that the remaining terms of the sum vanish, it suffices to observe that we can rewrite the corresponding sum as
\begin{multline*}
\sum_{\substack{(i,j)\in I^2 \\ (g,h)\in G^2}} c(\gamma,\gamma^{-1}h)\,d\chi_{i,g}\wedge d\chi_{j,h} \\
=-\sum_{(j,h)\in I\times G}c(\gamma,\gamma^{-1}h)\,d\chi_{j,h}\wedge d\biggl(\sum_{(i,g)\in I\times G} \chi_{i,g}\biggr)=0
\end{multline*}
since the sum in parenthesis is identically equal to $1$.

The last assertion concerning Gromov hyperbolic groups is a consequence of \cite[Theorem 11]{Min01}: for such a group the natural maps
$$
H^k_b(G,\R)\to H^k(G,\R)
$$
are surjective for any $k\ge 2$.
\end{proof}

\section{Proof of Main Theorem}\label{sect:proofMain}

With all the tools previously developed, we are now in a very good position to prove our Main Theorem.

So, let $\mu\colon Z\to X$ be a modification of compact Kähler manifolds, and take $\alpha\in\mathcal W_Z$. We want to show that $\mu_*\alpha$ is in $\mathcal W_X$. 

First of all, since $\alpha$ is a big class and $\mu$ is surjective between equidimensional manifolds, Proposition \ref{prop:genfinbig} gives that $\mu_*\alpha$ is a big class. 

To show the nefness of $\mu_*\alpha$ is subtler, since in general it is false as we observed in Remark \ref{rem:pushnef}. This will be achieved together with $\tilde d$-boundedness of $\mu_*\alpha$ thanks to the topological description of classes which are $\tilde d$-bounded by Brunnbauer, Kotschick, and Schönlinner, as follows. 

Let $G=\pi_1(X)$, and fix a universal covering $\tilde X$ of $X$. Consider the diagram
$$
\xymatrix{
&  & BG \\
Z \ar[rru]^{c_{\tilde X,X}\circ\mu} \ar[dr]_\mu & & \\
& X. \ar[ruu]_{c_{\tilde X,X}}&
}
$$
The map $\mu$ is a modification between smooth manifold, and thus 
$$
\mu_*\colon\pi_1(Z)\to\pi_1(X)=G
$$ 
is an isomorphism (see \textsl{e.g.} \cite[Proposition 2.3]{BP21}). Therefore, pulling-back via $\mu$ the universal covering $\tilde X$ of $X$ gives a universal covering of $\tilde Z\to Z$. By construction, $\tilde Z$ is the pull-back of $EG$ via $c_{\tilde X,X}\circ\mu$, so that 
$$
c_{\tilde X,X}\circ\mu=c_{\tilde Z,Z}
$$ 
is the classifying map of the universal covering $\tilde Z$ as a $G$-principal bundle on $Z$.

Since $\alpha\in V^2_\textrm{hyp}(Z)$, by Corollary \ref{cor:keyhyperbolic}, there exists a (unique) class $w\in V^2_\textrm{hyp}(BG)$ such that $c_{\tilde Z,Z}^*w=\alpha$. Call $\beta:=c_{\tilde X,X}^*w\in V^2_\textrm{hyp}(X)$. Then, we have $\alpha=\mu^*\beta$ by construction and, since $\mu$ is a modification, we also have
$$
\mu_*\alpha=\mu_*\mu^*\beta=\beta,
$$
so that $\beta$ is a real $(1,1)$-class which is, by definition of $V^2_\textrm{hyp}(BG)$, $\tilde d$-bounded. What we have gained is that $\alpha=\mu^*\beta$ is now a pull-back via the surjective holomorphic map $\mu$. Thus, we can apply Proposition \ref{prop:surjnef} to obtain that $\beta$ is nef, since $\alpha$ is nef. We already saw that $\beta$ is big, and therefore $\beta\in\mathcal W_X$, as desired.\qed

\begin{rem}\label{rem:genfin}
What did we really use in the above proof? In order to obtain the bigness of $\mu_*\alpha$, we apply Proposition \ref{prop:genfinbig} which needs $\mu$ to be a surjective holomorphic map between compact Kähler manifolds of the same dimension. Thus, here everything works even if $\mu$ is supposed to be generically finite and surjective. 

Next, in order to obtain $\alpha$ as the pull-back of $\beta$ we need the induced morphism $\mu_*\colon\pi_1(Z)\to\pi_1(X)$ on fundamental groups to be an isomorphism. 

Then, to say that $\mu_*\alpha =\beta$ we use that $\mu$ is a modification. But if $\mu$ is merely supposed to be a generically finite map of degree $m$, then we would have obtained $\mu_*\alpha =m\beta$ so that $\beta=\frac 1m\mu_*\alpha$ would still be a real $\tilde d$-bounded $(1,1)$-class.

Finally, in order to be able to apply Proposition \ref{prop:surjnef} we just need surjectivity of $\mu$. 

Summing up, our Main Theorem is still valid in the more general context of a generically finite surjective holomorphic map $\mu\colon Z\to X$, provided $\mu$ induces an isomorphism at the level of fundamental group.
\end{rem}

\section{Proofs of Corollaries}

In this section we give proofs of the corollaries of our Main Theorem stated in the introduction.

\subsection{Proof of Corollary \ref{cor:cor1}}
Let $f\colon X\dasharrow Y$ be a bimeromorphic mapping. Then, by resolving the singularities of the closure of the graph of $f$ in $X\times Y$, we get a compact Kähler manifold $Z$ together with modifications $p\colon Z\to X$ and $q\colon Z\to Y$ as follows
$$
\xymatrix{
& Z \ar[dl]_p\ar[dr]^q& \\
X \ar@{-->}[rr]_f& & Y.
}
$$
Now, take $\alpha\in\mathcal W_X$. We want to show that $\beta:=f_*\alpha=q_*p^*\alpha$ gives a class in $\mathcal W_Y$. By \cite[Proposition 2.29]{BDET24} $\tilde\alpha:=p^*\alpha$ is a weakly Kähler hyperbolic class in $\mathcal W_Z$, therefore $q_*\tilde\alpha$ is a weakly Kähler hyperbolic class in $\mathcal W_Y$, by our Main Theorem. \qed

\subsection{Proof of Corollary \ref{cor:cor2}}
Let $f\colon X\dasharrow Y$ be a generically finite dominant meromorphic mapping. Then, by resolving the singularities of the closure of the graph of $f$ in $X\times Y$, we get a compact Kähler manifold $Z$ together with a modification $p\colon Z\to X$ and, this time, a generically finite surjective holomorphic map $q\colon Z\to Y$ as follows
$$
\xymatrix{
& Z \ar[dl]_p\ar[dr]^q& \\
X \ar@{-->}[rr]_f& & Y.
}
$$
Now, take $\alpha\in\mathcal W_Y$. Here, we want to show that $\beta:=p_*q^*\alpha$ gives a class in $\mathcal W_X$. By \cite[Proposition 2.29]{BDET24} $\tilde\alpha:=q^*\alpha$ is a weakly Kähler hyperbolic class in $\mathcal W_Z$, therefore $p_*\tilde\alpha$ is a weakly Kähler hyperbolic class in $\mathcal W_Y$, by our Main Theorem.

\smallskip

Before we start the proof of the converse, let us briefly explain what is the morphism induced on fundamental groups by the meromorphic mapping $f$. It is by definition the composition 
$$
f_*:=q_*\circ(p_*)^{-1}\colon\pi_1(X)\to\pi_1(Z)\to\pi_1(Y),
$$
which can be taken since $p$ is a modification and therefore $p_*$ is an isomorphism. It is straightforward to check that the morphism $f_*$ is independent of the smooth bimeromorphic model $Z$ chosen to desingularize the closure of the graph of $f$, since if one picks two of them, there is always a third dominating both.

\smallskip

Coming back to the proof of the corollary, since by hypothesis $f_*$ is injective, it follows that $q_*\colon\pi_1(Z)\to\pi_1(Y)$ is injective. Let $H\le\pi_1(Y)$ be the isomorphic image of $\pi_1(Z)$ in $\pi_1(Y)$. Take the connected étale covering $\pi\colon W\to Y$ corresponding to this subgroup of $\pi_1(Y)$. By construction we thus can take a lifting $\tilde q\colon Z\to W$ of $q$, as in the diagram below
\begin{equation}\label{diag:genfin}
\xymatrix{
& & & W \ar[ddl]^\pi \\
& Z \ar[dl]_p\ar[dr]^q \ar[urr]^{\tilde q} & &\\
X \ar@{-->}[rr]_f& & Y. &
}
\end{equation}
The lifting $\tilde q$ is again generically finite, and $\dim W=\dim Y=\dim Z$, so that $\tilde q$ must be surjective. A posteriori then $W$ is compact Kähler and $H$ of finite index, so that $\pi$ is a finite étale cover. Summing up, $\tilde q$ is a generically finite surjective map, and again by construction $\tilde q_*\colon\pi_1(Z)\to\pi_1(W)$ is an isomorphism.

To conclude: $X$ weakly Kähler hyperbolic implies $Z$ weakly Kähler hyperbolic by \cite[Proposition 2.29]{BDET24}, $Z$ weakly Kähler hyperbolic implies $W$ weakly Kähler hyperbolic by Remark \ref{rem:genfin}, and $W$ weakly Kähler hyperbolic implies $Y$ weakly Kähler hyperbolic by \cite[Proposition 2.6]{BDET24}. \qed

\subsection{Proof of Corollary \ref{cor:cor3}} 

The morphism $f$ is a particular case of that considered in Corollary \ref{cor:cor2}. Thus, we deduce immdiately that if $Y$ is weakly Kähler hyperbolic so is $X$, and conversely if $X$ is weakly Kähler hyperbolic so is $Y$, whenever the induced map on fundamental groups is injective. 

We now focus on degeneracy sets. Recall that if $X$ is weakly Kähler hyperbolic then
$$
Z_X=\bigcap_{[\mu]\in\mathcal W_X}\operatorname{Null}([\mu]),
$$
where $\operatorname{Null}([\mu])$ is the union of all positive dimensional irreducible subvarieties $Z\subset X$ such that $\int_Z\mu^{\dim Z}=0$. It is a proper Zariski closed set in $X$, and by \cite[Remark 2.25]{BDET24} it is always realized as the null set of some weakly Kähler hyperbolic class in $\mathcal W_X$.

\begin{lem}\label{lem:degbelowcontainedindegabove}
Let $f\colon X\to Y$ be a generically finite surjective holomorphic map between weakly Kähler hyperbolic manifolds and $[\mu]\in\mathcal W_Y$. Then, 
$$
\operatorname{Null}([f^*\mu])=f^{-1}\bigl(\operatorname{Null}([\mu])\bigr)\cup\operatorname{Exc}(f).
$$
\end{lem}

\begin{proof}
If $x\in\operatorname{Exc}(f)$ there exists an irreducible curve $x\in C\subset X$ which is contracted by $f$, so that $f^*[\mu]\cdot C=0$ and thus $x\in\operatorname{Null}([f^*\mu])$. 

If $x\in\operatorname{Null}([f^*\mu])$, let $x\in Z\subset X$ be an irreducible positive dimensional subvariety such that $[f^*\mu]^{\dim Z}\cdot Z=0$. The projection formula gives 
$$
0=[f^*\mu]^{\dim Z}\cdot Z=[\mu]^{\dim Z}\cdot f_*Z.
$$
If $\dim f(Z)<\dim Z$, then $Z\subset\operatorname{Exc}(f)$, and thus $x\in\operatorname{Exc}(f)$; otherwise $f_*Z$ is an effective non-zero $\dim Z$-dimensional irreducible cycle whose support is $f(Z)$, and therefore $[\mu]^{\dim Z}\cdot f(Z)=0$, giving $f(Z)\subset\operatorname{Null}([\mu])$. In this case then, $x\in f^{-1}\bigl(\operatorname{Null}([\mu])\bigr)$.

It remains to show that if $x\in f^{-1}\bigl(\operatorname{Null}([\mu])\bigr)\setminus\operatorname{Exc}(f)$, it follows that $x\in\operatorname{Null}([f^*\mu])$. If $f(x)\in\operatorname{Null}([\mu])$, let $f(x)\in W\subset Y$ be an irreducible positive dimensional, say $\dim W=d$, subvariety such that $[\mu]^{d}\cdot W=0$. Since $x\not\in\operatorname{Exc}(f)$ we can find an irreducible subvariety $x\in W'\subset X$ such that $f|_{W'}\colon W'\to W$ is surjective and generically finite and this gives that $[f^*\mu]^{d}\cdot W'$ is a positive multiple of $[\mu]^d\cdot W$. But then, $x\in \operatorname{Null}([f^*\mu])$.
\end{proof}

\begin{rem}\label{rem:degsetgenfiniteisop1}
By the proof of our Main Theorem and Remark \ref{rem:genfin}, if we have moreover that $f_*\colon\pi_1(X)\to\pi_1(Y)$ is an isomorphism, then very class $\alpha\in\mathcal W_X$ is indeed the pull-back of a class in $\mathcal W_Y$. Thus, we obtain in this case, by taking the intersection over all the $[\mu]\in\mathcal W_Y$ that 
$$
Z_X=f^{-1}(Z_Y)\cup\operatorname{Exc}(f).
$$
\end{rem}

\begin{lem}\label{lem:degeneracyetalecover}
Let $f\colon X\to Y$ be a finite étale cover between weakly Kähler hyperbolic manifolds. 
Then, $Z_X=f^{-1}(Z_Y)$.
\end{lem}

\begin{proof}
Suppose first that $f$ is a Galois cover. By Lemma \ref{lem:degbelowcontainedindegabove} we know that, for $[\mu]\in\mathcal W_Y$, we have $\operatorname{Null}([f^*\mu])=f^{-1}\bigl(\operatorname{Null}([\mu])\bigr)$, since $\operatorname{Exc}(f)=\emptyset$. Therefore, $Z_X\subset\operatorname{Null}([f^*\mu])=f^{-1}\bigl(\operatorname{Null}([\mu])\bigr)$, and taking the intersection over all $[\mu]\in\mathcal W_Y$, we get $Z_X\subset f^{-1}(Z_Y)$.

Next, take $[\alpha_0]\in\mathcal W_X$ such that $Z_X=\operatorname{Null}([\alpha_0])$. Take the average $[\tilde\alpha_0]:=\sum_{\gamma\in G}[\gamma^*\alpha_0]$ of $[\alpha_0]$ with respect to the deck transformation group $G=\operatorname{Aut}(f\colon X\to Y)$. Then, $[\tilde\alpha_0]$ is a sum of nef classes which is $G$-invariant and so on the one hand $[\tilde\alpha_0]=[f^*\mu_0]$ for some $[\mu_0]\in\mathcal W_Y$ being $f\colon X\to Y$ Galois, and on the other hand for each $Z\subset X$ positive dimensional irreducible subvariety we have that $[\tilde\alpha_0]^{\dim Z}\cdot Z$ is a sum of non-negative terms one of which is $[\alpha_0]^{\dim Z}$.

We deduce that if $[\tilde\alpha_0]^{\dim Z}\cdot Z=0$, \textsl{i.e.} if $Z\subset\operatorname{Null}([\tilde\alpha_0])$, then $[\alpha_0]^{\dim Z}\cdot Z=0$ and so $Z\subset\operatorname{Null}([\alpha_0])=Z_X$. In other words, $Z_X\subset\operatorname{Null}([\tilde\alpha_0])\subset\operatorname{Null}([\alpha_0])=Z_X$ and thus $Z_X=\operatorname{Null}([\tilde\alpha_0])=\operatorname{Null}([f^*\mu_0])$.

But, $\operatorname{Null}([f^*\mu_0])=f^{-1}\bigl(\operatorname{Null}([\mu_0])\bigr)\supset f^{-1}(Z_Y)$, so that $f^{-1}(Z_Y)\subset Z_X$ and we are done.

\smallskip

In the general case where $f\colon X\to Y$ is not necessarily Galois, take a finite Galois covering $h\colon \hat X\to Y$ factoring through $X$ via a finite Galois covering $g\colon\hat X\to X$ (this is always possible, see for instance the argument in the proof of \cite[Proposition 2.6]{BDET24}). Then, we have 
$$
Z_{\hat X}=g^{-1}(Z_X)=h^{-1}(Z_Y)=g^{-1}\bigl(f^{-1}(Z_Y)\bigr),
$$
and so $Z_X=f^{-1}(Z_Y)$.
\end{proof}

We can now give the proof of Corollary \ref{cor:cor3}. We proceed as in the proof of the second part of Corollary \ref{cor:cor2}, in particular considering the diagram \eqref{diag:genfin}. The situation is now as follows:
$$
\xymatrix{
 & & W \ar[ddl]^\pi \\
 X\ar[dr]^f \ar[urr]^{\tilde f} & &\\
 &Y, &
}
$$
where $\pi\colon W\to Y$ is the finite étale covering corresponding to the subgroup $\pi_1(Y)\supset f_*(\pi_1(X))\simeq\pi_1(X)$, and $\tilde f$ is a lifting of $f$. As observed during the proof of Corollary \ref{cor:cor3}, $\tilde f$ is generically finite and surjective, and induces an isomorphism on fundamental groups. By Remark \ref{rem:degsetgenfiniteisop1}, we have $Z_X=\tilde f^{-1}(Z_W)\cup\operatorname{Exc}(\tilde f)$. By Lemma \ref{lem:degeneracyetalecover}, we have $Z_W=\pi^{-1}(Z_Y)$; moreover, $\operatorname{Exc}(f)=\operatorname{Exc}(\tilde f)$.

Putting all together, we obtain 
$$
\begin{aligned}
Z_X & =\tilde f^{-1}\bigl(\pi^{-1}(Z_Y)\bigr)\cup\operatorname{Exc}(\tilde f) \\
& =f^{-1}(Z_Y)\cup\operatorname{Exc}(f).
\end{aligned}
$$
\qed

\section{Toroidal compactifications of ball quotients}\label{sec:toroidal}

In this section, we exhibit a new family of natural examples of weakly K\"ahler hyperbolic manifolds. To do so, let $G<\PU(n,1)$ be a torsion free lattice which is not cocompact. The manifold $X^\circ:=G\backslash \B^n$ is thus a quasi-projective variety that can be compactified in various ways. Two important such compactifications are the following (for which we refer the reader to \cite{AMRT10,Mok12}):
\begin{enumerate}
\item The Baily--Borel compactification $X^\circ\subset X_{\BB}$ is the normal projective variety obtained from $X^\circ$ by adding points (\emph{closing the cusps} from the differential geometric viewpoint). It is minimal in the sense that given any normal compactification $X^\circ\hookrightarrow \bar X$, the identity map on $X$ extends to a holomorphic map $\bar X\to X_{\BB}$.
\item The toroidal compactification $X^\circ\subset X_{\tor}$ is a smooth projective variety; the boundary $X_{\tor}\setminus X^\circ$ is a finite union of disjoint hypersurfaces that are isomorphic to abelian varieties and have negative normal bundle. Their blow-down to isolated normal singularities gives back $X_{\BB}$.
\end{enumerate}

Suitable toroidal compactifications of these quotients provide the examples mentioned in the introduction.

\begin{thm}\label{thm:toroidal-compactification-wkh}
Up to replacing $G$ with a finite index subgroup, the manifold $X_{\tor}$ is weakly K\"ahler hyperbolic. 

Moreover, it is not bimeromorphic to any K\"ahler hyperbolic manifold.
\end{thm}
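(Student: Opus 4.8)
The plan is to exhibit an explicit class in $\mathcal W_{X_{\tor}}$ and then to obstruct genuine K\"ahler hyperbolicity through the abelian boundary divisors.

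For the first assertion, the natural candidate is $\alpha:=c_1(K_{X_{\tor}}+D)\in H^{1,1}(X_{\tor},\R)$, where $D=X_{\tor}\setminus X^\circ=\sum_i E_i$ is the reduced boundary. After passing to a finite index subgroup (so that $X_{\tor}$ is a smooth projective toroidal compactification), the theory of automorphic line bundles on ball quotients \cite{AMRT10,Mok12} shows that $K_{X_{\tor}}+D$ is big and nef: it is the pull-back of the ample polarization of the Baily--Borel compactification under the contraction $\pi\colon X_{\tor}\to X_{\BB}$. Moreover, since the $E_i$ are mutually disjoint abelian varieties, adjunction gives $(K_{X_{\tor}}+D)|_{E_i}=K_{E_i}=0$, so $\alpha$ is numerically trivial along the boundary.

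It remains to prove that $\alpha$ is $\tilde d$-bounded, i.e. that $\alpha\in V^2_{\hyp}(X_{\tor})$; this is the heart of the matter. By Corollary \ref{cor:keyhyperbolic} the space $V^2_{\hyp}(X_{\tor})$ depends only on $\pi_1(X_{\tor})$, and by Lemma \ref{lem:bounded cohomology} every class pulled back from a Gromov hyperbolic quotient of the fundamental group is hyperbolic. I would therefore proceed as follows. First, one checks that $\alpha$ is aspherical, so by Lemma \ref{lem:imagec^*} it comes from a class $w'\in H^2(\pi_1(X_{\tor}),\R)$; the vanishing $(K_{X_{\tor}}+D)|_{E_i}=0$ forces $w'$ to be trivial on each peripheral subgroup $\pi_1(E_i)\cong\Z^{2(n-1)}$, so that $w'$ descends to the Baily--Borel group $\bar\Gamma:=\pi_1(X_{\BB})=G/\langle\langle P_1,\dots,P_k\rangle\rangle$, obtained from $G$ by coning off the cusps. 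The main obstacle is then to show that, after replacing $G$ by a suitable finite index subgroup, $\bar\Gamma$ is Gromov hyperbolic. Here I would use that a non-uniform lattice $G<\PU(n,1)$ is hyperbolic relative to its (virtually nilpotent) cusp subgroups $\{P_i\}$, and that sufficiently deep Dehn fillings of a relatively hyperbolic group are hyperbolic (Osin; Groves--Manning) -- passing to a finite index subgroup is exactly what makes the fillings deep enough. Granting this, $w'$ is pulled back from the hyperbolic group $\bar\Gamma$, whence $\alpha\in V^2_{\hyp}(X_{\tor})$ by Lemma \ref{lem:bounded cohomology} and Theorem \ref{thm:keyhyperbolic}; together with bigness and nefness this yields $\alpha\in\mathcal W_{X_{\tor}}$.

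For the second assertion, suppose $X_{\tor}$ were bimeromorphic to a K\"ahler hyperbolic manifold $Y$ and resolve the map to a common modification $p\colon W\to X_{\tor}$, $g\colon W\to Y$. Let $\hat E\subset W$ be the strict transform of a boundary divisor $E:=E_i$, so $\hat E$ is birational to an abelian variety and in particular is not uniruled. If $g$ contracts $\hat E$ (that is, $\dim g(\hat E)<n-1$), then $\hat E$ is an exceptional divisor of a modification of the smooth $Y$, hence its positive-dimensional fibres are rationally connected and $\hat E$ is uniruled -- contradicting that it is birational to a torus. Otherwise $g|_{\hat E}$ is generically finite onto an $(n-1)$-dimensional subvariety of $Y$; choosing a K\"ahler hyperbolic form $\omega_Y=d\beta$ on $\tilde Y$ and lifting, the pull-back $\Omega:=(g|_{\hat E})^*\omega_Y$ acquires a bounded primitive on the universal cover of $\hat E$. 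Since $\pi_1(\hat E)\cong\Z^{2(n-1)}$ is amenable, a F\o lner--Stokes argument in the spirit of Gromov \cite{Gro91} forces $\int_{\hat E}\Omega^{n-1}=0$, whereas $\int_{\hat E}\Omega^{n-1}>0$ because $\omega_Y$ is K\"ahler and $g|_{\hat E}$ is generically finite onto an $(n-1)$-fold. This contradiction rules out any such $Y$; in particular $X_{\tor}$ is not a modification of a K\"ahler hyperbolic manifold.
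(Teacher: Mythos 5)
Your proof of the first assertion follows the same underlying strategy as the paper --- reduce everything to the Gromov hyperbolicity of $\pi_1(X_{\BB})$ (which you propose to obtain by relatively hyperbolic Dehn filling after passing to a finite index subgroup; this is precisely what the paper outsources to \cite[Lemma 3.2]{zhu24}) and then invoke Lemma \ref{lem:bounded cohomology}. However, the way you transport the class down to $\bar\Gamma$ has a genuine gap. Knowing that $w'\in H^2(\pi_1(X_{\tor}),\R)$ restricts to zero on each peripheral subgroup $P_i$ does not imply that $w'$ is inflated from the quotient $\bar\Gamma=G/\langle\langle P_1,\dots,P_k\rangle\rangle$: writing $1\to N\to\pi_1(X_{\tor})\to\bar\Gamma\to 1$, the image of the inflation map in degree two is controlled by the Lyndon--Hochschild--Serre spectral sequence, and besides the vanishing of the restriction to $N$ (which is itself stronger than vanishing on the $P_i$'s) there is a second obstruction living in $H^1\bigl(\bar\Gamma,H^1(N,\R)\bigr)$ that you do not address. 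The paper sidesteps this entirely by using the \emph{asphericity} of $X_{\BB}$ (the other half of \cite[Lemma 3.2]{zhu24}, which you never invoke): since $X_{\BB}$ is a $K(\bar\Gamma,1)$, the ample class $\iota^*\alpha$ on $X_{\BB}$ \emph{is} already a class on $B\bar\Gamma$, it is hyperbolic by Lemma \ref{lem:bounded cohomology}, and its pullback $(\iota\circ\pi)^*\alpha$ to $X_{\tor}$ is hyperbolic by Theorem \ref{thm:keyhyperbolic}. Your candidate class $c_1(K_{X_{\tor}}+D)$ is (a multiple of) exactly this pulled-back class, so you can simply replace the descent argument by that observation and nothing else in your proof changes.

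For the second assertion you genuinely diverge from the paper and prove a stronger statement. The paper only excludes modifications $\mu\colon X_{\tor}\to Z$: it shows $X_{\tor}$ contains no rational curve at all (any such curve would have to lie in the null locus of $\omega$, which is a disjoint union of tori; alternatively one uses the metric of nonpositive holomorphic sectional curvature from \cite{sarem23}), so $\mu$ would be a biholomorphism, which is absurd since $X_{\tor}$ contains tori and a K\"ahler hyperbolic $Z$ cannot. You instead rule out \emph{any} K\"ahler hyperbolic manifold in the bimeromorphic class by applying Gromov's amenable vanishing theorem to the strict transform $\hat E$ of a boundary torus on a common resolution; this is correct in outline and buys a stronger conclusion. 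Two details need care, though: (i) $\hat E$ may be singular, so you should pass to a resolution $\hat E'\to\hat E$ before integrating and running the F\o lner argument; and (ii) what the vanishing theorem requires is amenability of the deck group of the covering on which the primitive of $(g|_{\hat E'})^*\omega_Y$ is bounded, namely the image of $\pi_1(\hat E')$ in $\pi_1(Y)$. You should not assert $\pi_1(\hat E)\cong\Z^{2(n-1)}$ (a strict transform need not have the same fundamental group as $E$), but this image factors through $\pi_1(E)\cong\Z^{2(n-1)}$, hence is amenable, which is all you need.
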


\begin{proof}
According to \cite[Lemma~3.2]{zhu24}, we know that $X_{\BB}$ is aspherical and that $\pi_1(X_{\BB})$ is Gromov-hyperbolic (this is only achieved after replacing $G$ with a finite index subgroup). Asphericity implies that
$$
H^2(X_{\BB},\R)=H^2\bigl(\pi_1(X_{\BB}),\R\bigr),
$$
and according to Lemma~\ref{lem:bounded cohomology} and Gromov-hyperbolicity of $\pi_1(X_{\BB})$, we infer that
$$
H^2(X_{\BB},\R)=V^2_{\hyp}(X_{\BB}).
$$
Consider an embedding $\iota\colon X_{\BB}\hookrightarrow \PP^N$ and the contraction of the exceptional tori $\pi\colon X_{\tor}\to X_{\BB}$. Since any class in $H^2(X_{\BB},\R)$ is hyperbolic, we can pick an ample class $\alpha$ in $\mathbb P^N$ and pull it back via $\iota\circ\pi$ to $X_{\tor}$.
This class $\omega=(\iota\circ\pi)^*\alpha$ is obviously big and nef and it is also hyperbolic since $\iota^*\alpha$ is so.

Let us now check that $X_{\tor}$ is not birational to any Kähler hyperbolic manifold. By contradiction, suppose $\mu\colon X_{\tor}\dasharrow Z$ is a bimeromorphic mapping, with $Z$ a Kähler hyperbolic manifold. Since $X_{\tor}$ is smooth and $Z$ does not contain any rational curve being Kähler hyperbolic, by \cite[Corollary 1.44]{Deb01}, $\mu$ is indeed everywhere defined, \textsl{i.e.} it is a modification. Through a general point of the exceptional locus of $\mu$ there is a rational curve (which is contracted by $\mu$), \cite[Proposition 1.43]{Deb01}; any rational curve in $X_{\tor}$ is not contained in the exceptional locus of $\pi\colon X_{\tor}\to X_{\BB}$ since it consists of tori. Now, for the class $\omega$ above the null locus coincide with the union of the $\pi$-exceptional tori, and any rational curve must be contained in this null locus. The upshot is that $X_{\tor}$ cannot contain any rational curve and $\mu$ must be a biholomorphism, but this is impossible since $X_{\tor}$ contains tori, and $Z$ does not, being Kähler hyperbolic.
\end{proof}

\begin{rem}
Observe that by \cite[Corollary 3]{sarem23}, $X_{\tor}$ has a Kähler metric with non-positive holomorphic sectional curvature, which can be used in the proof above to exclude directly the existence of rational curves in $X_{\tor}$.
\end{rem}

\bibliography{bibliography}{}
	
\end{document}